\newcommand{\excise}[1]{}
\newtheorem{theorem}{Theorem}[section]
\newtheorem{lemma}[theorem]{Lemma}
\newtheorem{corollary}[theorem]{Corollary}
\newtheorem{proposition}[theorem]{Proposition}
\theoremstyle{definition}
\newtheorem{remark}[theorem]{Remark}
\newtheorem{definition}[theorem]{Definition}
\noindent\makebox[0mm][r]{\arabic{enumi}.}}
\noindent\makebox[0mm][r]{(\roman{enumi})}}
\def\<{\langle}
\def\>{\rangle}
\def\0{\mathbf{0}}
\def\CC{{\mathbb C}}
\def\NN{{\mathbb N}}
\def\QQ{{\mathbb Q}}
\def\RR{{\mathbb R}}
\def\ZZ{{\mathbb Z}}
\def\cB{{\mathcal B}}
\def\oJ{{\hspace{.45ex}\overline{\hspace{-.45ex}J}}}
\def\omu{\overline{\mu}}
\def\onu{\overline{\nu}}
\def\oalpha{\overline{\alpha}}
\def\ogamma{\overline{\gamma}}
\def\osigma{\overline{\sigma}}
\def\otau{\overline{\tau}}
\def\del{\partial}
\def\sat{{\rm sat}}
\def\cocoa{{\hbox{\rm C\kern-.13em o\kern-.07em C\kern-.13em o\kern-.15em A}}}
\numberwithin{equation}{section}
\begin{document}

\mbox{}
\vspace{-3ex}
\title{Weyl Closure of Hypergeometric Systems}

\author{Laura Felicia Matusevich}
\address{Department of Mathematics \\
Texas A\&M University \\ College Station, TX 77843.}
\email{laura@math.tamu.edu}
\thanks{The author was partially supported by NSF Grant DMS 0703866}

\subjclass[2000]{Primary: 33C70, 32C38; Secondary: 14M25, 13N10}

\begin{abstract}
We show that $A$-hypergeometric systems and Horn
hypergeometric systems are Weyl closed for
very generic parameters. 
\end{abstract}
\maketitle

\mbox{}
\vspace{-5.05ex}
\parskip=0ex
\parindent2em
\setcounter{tocdepth}{2}
\parskip=1ex
\parindent0pt

\section{Introduction}

Let $D=D_n$ be the (complex) Weyl algebra, that is, the ring of
linear partial differential operators with polynomial coefficients
in variables $x_1,\dots,x_n$ and $\del_1,\dots,\del_n$, where
$\del_i$ stands for $\frac{\del}{\del x_i}$. Write
$R = \CC(x)\otimes_{\CC[x]}D$ for the ring of operators with rational function
coefficients. If $I$ is a left $D$-ideal, then the \emph{Weyl closure} of $I$
is
\[
RI \cap D.
\]
If $I$ equals its Weyl closure, then $I$ is said to be \emph{Weyl closed}.

The operation of Weyl closure is an analog of the radical operation in
the polynomial ring, as the Weyl closure of $I$ is the differential annihilator
of the space 
of germs of holomorphic solutions of $I$ at a generic nonsingular point
(see Proposition 2.19 in \cite{tsai-thesis}).
The notion of Weyl closure was introduced by Harrison Tsai in \cite{tsai-thesis}.
This work contains an algorithm to compute the Weyl closure of a left
$D$-ideal, which has been implemented by Anton Leykin and Harrison Tsai
in the computer algebra system Macaulay2 \cite{M2}. Other references are
\cite{weyl-closure-1-var,weyl-closure-multivar}.

The goal of this note is to show that $A$-hypergeometric systems and
Horn hypergeometric systems are Weyl closed when the parameters are generic enough.
Our main result, Theorem~\ref{thm:A-hyp-weyl-closed}, gives a stronger
property than Weyl closure
for any $A$-hypergeometric system with very generic parameters: 
such a system is the differential annihilator of a \emph{single function}.
This has practical consequences: often we are interested in one specific
hypergeometric series $F$, which we would like to study through the differential
operators it satisfies. If the series in question is
a function of $m$ variables, traditional methods will provide $m$ differential
equations to form a Horn system that annihilates $F$, but in general, this
system will be strictly contained in the differential annihilator. A more
modern approach produces an $A$-hypergeometric system for which
$F$ is a solution. Theorem~\ref{thm:A-hyp-weyl-closed} says that
any other differential equation
$F$ satisfies will be a consequence of these $A$-hypergeometric ones.
There is an analogy to algebraic numbers: if one studies a finite extension
$\QQ(\lambda)$ of $\QQ$, then having a polynomial with rational coefficients
whose root is $\lambda$ is
useful, but what one really wants is the \emph{minimal} such polynomial.

We illustrate the typical situation in an example. 
Given $a,a' \in \CC \backslash \ZZ$, consider the series
\[
G(s,t) = \sum_{(m,n) \in \NN^2} c_{m,n}s^mt^n= 
\sum_{(m,n) \in \NN^2} \frac{(a)_{m-2n}}{(a')_{n-2m}} \frac{s^mt^n}{m!n!},
\]
where the Pochhammer symbol $(a)_k$ is given by
\[
(a)_k = 
\left\{
    \begin{array}{lr}
      \prod_{l=0}^{k-1} (a+l)                 & k \geq 0 \\
           &  \\
      ({\prod_{l=1}^{|k|} (a-l)})^{-1} \quad   & k < 0
    \end{array}
\right.
\]

This series converges in a neighborhood of the origin, and
it is a hypergeometric series, since its coefficients
satisfy the following special recurrence relations:
\[
\frac{c_{m+1,n}}{c_{m,n}} = \frac{(-2m+n+a'-1)(-2m+n+a'-2)}{(m+1)(m-2n+a)}
\]
\[  
\frac{c_{m,n+1}}{c_{m,n}} = \frac{(m-2n+a-1)(m-2n+a-2)}{(n+1)(-2m+n+a')}
\]
which translate into the following system of differential equations for $G$:
\begin{equation}
\label{eqn:horn1}
\begin{array}{r}
\big[ \frac{1}{s} \theta_s(\theta_s-2\theta_t+a-1) - 
(-2\theta_s +\theta_t +a'-1)(-2\theta_s+\theta_t+a'-2)\big] G(s,t) = 0 \\ \\
\big[ \frac{1}{t} \theta_t(-2\theta_s+\theta_t+a'-1) -
(\theta_s-2\theta_t+a-1)(\theta_s-2\theta_t+a-2) \big]G(s,t) = 0
\end{array}
\end{equation}
where $\theta_s = s\frac{\del}{\del s}$ and $\theta_t = t \frac{\del}{\del t}$.

{\bf Question:} Is the above system the \emph{differential annihilator}
of $G$? This would mean that 
any differential equation for $G$ can be obtained
by taking combinations (with coefficients in the Weyl algebra in $s,t$,
$\frac{\del}{\del s}, \frac{\del}{\del t}$) of the
equations (\ref{eqn:horn1}).

It turns out that it is simpler to study $G$, and in particular, determine
its differential annihilator, 
if we make a change of variables, as follows.

Define matrices
\[
B = \left[
\begin{array}{rr}
1 & 0 \\
-2 & 1 \\
1 & -2 \\
0 & 1 
\end{array}
\right]
;\quad
A=
\left[
\begin{array}{rrrr}
3 & 2 & 1 & 0 \\
0 & 1 & 2 & 3
\end{array}
\right].
\]
The rows of $B$ tell us the factors that appear in the differential equations
(\ref{eqn:horn1}); these factors are obtained by adding appropriate
parameters to dot products of rows of $B$ with the vector $(\theta_s,\theta_t)$.
The matrix $A$ is chosen so that the columns of $B$ form a basis for its kernel.

Let 
\[
F(x_1,x_2,x_3,x_4) = x_2^{a'-1}x_3^{a-1} G(\frac{x_1x_3}{x_2^2},\frac{x_2x_4}{x_3^2})
= x_2^{a'-1}x_3^{a-1} G(x^B).
\]

Then $F$ satisfies the following system of differential equations
\[
[3\theta_1 + 2\theta_2 + \theta_3 - (2a'+ a -3)] F(x) = 0 ; \quad
[\theta_2 + 2\theta_3 + 3\theta_4 - (2a + a'-3)] F(x) = 0;
\]
\[
[\del_1\del_3 - \del_2^2]F(x) = 0; \quad
[\del_2 \del_4 - \del_3^2] F(x) = 0,
\]
where $\del_i$ stands for $\frac{\del}{\del x_i}$ and $\theta_i = x_i \del_i$.
The first two differential equations reflect the change of variables we applied to $G$.
The last two correspond to the differential equations (\ref{eqn:horn1}).
We call this system a \emph{Horn system} (Definition~\ref{def:horn}).

It turns out (by Theorem~\ref{thm:A-hyp-weyl-closed})
that, in order to get the differential annihilator of $F$, we need to
add another equation, namely
\[
[\del_1 \del_4-\del_2\del_3]F(x) = 0.
\]
When we do this, we obtain an \emph{A-hypergeometric system} (Definition~\ref{def:A-hyp}).

The $A$-hypergeometric system is strictly larger than the Horn system. To see this,
note that, as $a, a' \in \CC\backslash \ZZ$,
the Puiseux monomial $x_1^{\frac{2a'+a-3}{3}}x_4^{\frac{2a+a'-3}{3}}$ does not equal
$1$. This monomial 
is a solution of the Horn system,
but not of the $A$-hypergeometric system. 
On the other hand, by
Corollary~\ref{coro:horn-weyl-closed}, the Horn system itself is also Weyl closed
when the parameters are very generic.

This is interesting information. It tells us, for instance, that the function
$x_1^{\frac{2a'+a-3}{3}}x_4^{\frac{2a+a'-3}{3}}$ cannot be obtained from $F$ by analytic continuation,
a fact that was already known to Erd\'elyi \cite{erdelyi}, although he could not
justify it. From our perspective, the reason is simple: any function obtained from
$F$ by analytic continuation must satisfy the same differential equations as $F$,
i.e. it has to be a solution of the differential annihilator of $F$.
Thus, if we want
to understand the monodromy of the function $F$, the differential equations
we should study are the $A$-hypergeometric system, and not the smaller Horn system.

The plan for this article is as follows. In Section~\ref{sec:a-hyp}, we 
define $A$-hypergeometric systems, and show that they are Weyl closed for 
very generic parameters (Corollary~\ref{coro:A-hyp-weyl-closed}).
A key ingredient is the existence of \emph{fully supported}
(Definition~\ref{def:fully-supported}) convergent power series solutions 
of $A$-hypergeometric systems \cite{GKZ,SST,hypergeometric-series-solutions}.
In Section~\ref{sec:horn} we introduce Horn systems, and again, prove that they 
are Weyl closed for very generic parameters. The proofs in this section
rely heavily on results from \cite{dmm}.

\subsection*{Acknowledgements}

I am very grateful to Harry Tsai, for interesting conversations on the
subject of Weyl closure, as well as to Mutsumi Saito and Shintaro Kusumoto, who
found a mistake that has now been corrected.
Thanks also to Alicia Dickenstein, Bernd
Sturmfels and Ezra Miller, 
who made helpful comments on an earlier version of this 
manuscript. 
I especially thank
the referee, whose thoughtful suggestions have improved
this article.

\section{A-hypergeometric Systems}
\label{sec:a-hyp}

We will work in the Weyl algebra $D=D_n$ in $x_1,\dots,x_n$, $\del_1,\dots,\del_n$, 
and denote $\theta_j = x_j \del_j$.

Let $A = (a_{ij})$ be a $d\times n$ integer matrix of full rank $d$, satisfying two
conditions on its columns. The first is that they $\ZZ$-span $\ZZ^d$, and the
second is that they all lie in an open half space of $\RR^d$. In particular,
$A$ is not allowed to have a zero column.

\begin{definition}
\label{def:A-hyp}
Given $A$ as above, set
\[
E_i = \sum_{j=1}^n a_{ij} \theta_j : \quad i = 1,\dots,d,
\]
and define the \emph{toric ideal} to be
\[
I_A = \langle \del^u -\del^v : u,v \in \NN^n, Au = Av \rangle \subseteq \CC[\del_1,\dots,\del_n].
\]
For $\beta \in \CC^d$ the \emph{$A$-hypergeometric system with parameter $\beta$}
is the left $D$-ideal
\[
H_A(\beta) = I_A + \langle E-\beta \rangle \subseteq D,
\]
where $\langle E-\beta \rangle$ is shorthand for 
$\langle E_i - \beta_i : i = 1,\dots,d \rangle$.
\end{definition}

Note that although $\langle E-\beta \rangle$ depends on the matrix $A$,
this is not reflected in the notation.

$A$-hypergeometric systems were introduced in the work of Gelfand, Graev,
Kapranov and Zelevinsky \cite{GGZ,GKZ}.
The text \cite{SST} emphasizes computational aspects in the theory of
$A$-hypergeometric equations, and is highly recommended.

We wish to show that $H_A(\beta)$ is Weyl closed for very generic $\beta$.
Here, \emph{very generic} will mean 
``outside a countable locally finite collection of 
algebraic varieties''. 
The following definition gives us a
countable family of the hyperplanes that we will need to
avoid.

\begin{definition}
\label{def:nonresonant}
A \emph{facet} of $A$ is a subset of its columns that is maximal
among those minimizing nonzero linear functionals on~$\ZZ^d$.  
Denote the columns of $A$ by $a_1,\dots,a_n$.
Geometrically, the facets of $A$ correspond to the facets of the cone
$\RR_{\geq 0} A = \{ \sum_{i=1}^n \lambda_i a_i : \lambda_i \in \RR_{\geq 0}\} \subseteq \RR^d$,
all of which contain the origin.
For a facet $\sigma$ of~$A$ let $\nu_\sigma$ be its \emph{primitive
support function}, the unique rational linear form satisfying
\begin{enumerate}
\item $\nu_\sigma(\ZZ A) = \ZZ$,
\item $\nu_\sigma(a_j) \geq 0$ for all $j \in \{1,\dots,n\}$,
\item $\nu_\sigma(a_j) = 0$ for all $a_j \in \sigma$.
\end{enumerate}
A parameter vector $\beta \in \CC^d$ is \emph{$A$-nonresonant}\/ 
(or simply \emph{nonresonant}, when it causes no confusion) if
$\nu_\sigma(\beta) \notin \ZZ$ for all facets $\sigma$ of~$A$.  Note
that if $\beta$ is $A$-nonresonant, then so is $\beta+A\gamma$
for any $\gamma \in \ZZ^n$.
\end{definition}

Nonresonant parameters have nice properties, as is illustrated below.

\begin{lemma}
\label{lemma:deli}
Fix a nonresonant parameter $\beta$.
If $P \del_i \in H_A(\beta)$, then $P \in H_A(\beta - Ae_i)$.
\end{lemma}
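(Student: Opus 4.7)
The plan is to reinterpret the implication as injectivity of the shift map
\[
\phi_i \colon D/H_A(\beta - Ae_i) \longrightarrow D/H_A(\beta), \qquad [P] \longmapsto [P\del_i].
\]
First I would verify that $\phi_i$ is well defined, i.e., that $H_A(\beta - Ae_i)\del_i \subseteq H_A(\beta)$. The toric piece is immediate since $I_A$ and $\del_i$ both lie in the commutative ring $\CC[\del]$, and for each shifted Euler operator the commutation $[E_j, \del_i] = -a_{ji}\del_i$ gives
\[
\bigl(E_j - (\beta - Ae_i)_j\bigr)\del_i \;=\; (E_j - \beta_j + a_{ji})\del_i \;=\; \del_i(E_j - \beta_j) \;\in\; H_A(\beta).
\]
Under this reformulation, the lemma is exactly the statement $\ker \phi_i = 0$.

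To prove injectivity I would pass to the $\ZZ^d$-grading on $D$ in which $\deg x_j = -a_j$ and $\deg \del_j = a_j$. Every generator of $H_A(\beta)$ and $H_A(\beta - Ae_i)$ is $\ZZ^d$-homogeneous, $\phi_i$ is homogeneous of degree $a_i$, and it therefore suffices to handle a $\ZZ^d$-homogeneous $P$. Next I would bring in the convergent, fully supported Gelfand--Kapranov--Zelevinsky series solutions of $H_A(\beta)$ and $H_A(\beta - Ae_i)$ produced under nonresonance in \cite{GKZ, SST, hypergeometric-series-solutions}. Differentiation $f \mapsto \del_i f$ carries solutions of $H_A(\beta)$ to solutions of $H_A(\beta - Ae_i)$, by the same commutation used for well-definedness, and under nonresonance it is an isomorphism of solution spaces of common dimension $\vol(A)$: a solution of $H_A(\beta)$ killed by $\del_i$ would, via the Euler equations, descend to a nonzero solution of a hypergeometric system attached to the facet obtained by dropping the $i$th column, forcing $\nu_\sigma(\beta) \in \ZZ$ for that facet. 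Hence any $P$ with $P\del_i \in H_A(\beta)$ annihilates every convergent solution of $H_A(\beta - Ae_i)$.

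The main obstacle is bridging from ``$P$ annihilates every solution of $H_A(\beta - Ae_i)$'' back to the algebraic statement $P \in H_A(\beta - Ae_i)$, since that implication is precisely the Weyl closure property for $H_A(\beta - Ae_i)$ and invoking it would be circular with Theorem~\ref{thm:A-hyp-weyl-closed}. My workaround is to stay on the $D$-module side and upgrade the solution-level surjectivity of $\del_i$ to an isomorphism $D/H_A(\beta - Ae_i) \cong D/H_A(\beta)$, using that both are holonomic of the same rank $\vol(A)$. Concretely this amounts to producing an operator $Q \in D$ with $Q\del_i \equiv 1 \pmod{H_A(\beta)}$---an algebraic inverse to $\del_i$ on $D/H_A(\beta)$ whose existence is exactly what nonresonance secures, through invertibility of the operators $\theta_i + c$ that arise when one commutes $\del_i$ past $x_i$. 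Producing such a $Q$ cleanly in the presence of the toric relations $I_A$ is where the real content of the proof lives.
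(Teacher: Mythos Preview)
Your reformulation is exactly right: the lemma is precisely the injectivity of the right-multiplication map
\[
\phi_i\colon D/H_A(\beta - Ae_i)\to D/H_A(\beta),\qquad [P]\mapsto [P\del_i],
\]
and the paper proves it in one line by invoking Theorem~\ref{thm:iso} (quoted from \cite[Lemma~7.10]{dmm}), which asserts that $\phi_i$ is an \emph{isomorphism} for nonresonant~$\beta$. So the intended argument is simply a citation; your well-definedness check is correct and matches the standard verification.

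Where your proposal runs into trouble is the ``workaround''. Producing $Q$ with $Q\del_i\equiv 1\pmod{H_A(\beta)}$ shows that $\phi_i$ is \emph{surjective} (since $[1]$ generates the target as a left $D$-module), not injective, and equality of holonomic rank does not close the gap. In one variable already, the quotient map $D/Dx\del\twoheadrightarrow D/D\del$ is a surjection of holonomic modules both of rank~$1$, yet its kernel $D\del/Dx\del\cong D/Dx$ is nonzero (rank~$0$, but not zero). So ``surjective of the same rank'' only forces the kernel to have rank~$0$, i.e.\ to be $\CC[x]$-torsion, which is not enough. To get injectivity you need a genuine two-sided inverse on the $D$-module level---this is what the literature actually constructs under nonresonance---or an independent argument that $D/H_A(\beta-Ae_i)$ has no nonzero $\CC[x]$-torsion submodule.

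A smaller issue: your facet heuristic for injectivity of $\del_i$ on solutions (``dropping the $i$th column gives a facet'') is not correct as stated, since the column $a_i$ need not lie on any facet of the cone. The actual mechanism behind Theorem~\ref{thm:iso} is the construction of an explicit operator inverting $\del_i$ (a ``$b$-function'' or contiguity argument), and that is what you should either cite or reproduce. Once you accept Theorem~\ref{thm:iso}, the lemma is immediate and there is nothing circular: Theorem~\ref{thm:iso} is proved independently of Theorem~\ref{thm:A-hyp-weyl-closed}.
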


This is an immediate consequence of the following well known fact, a concise
proof of which can be found in \cite{dmm}[Lemma 7.10].

\begin{theorem}
\label{thm:iso}
If $\beta$ is nonresonant, the map $D/H_A(\beta) \rightarrow D/H_A(\beta+Ae_i)$
given by right multiplication by $\del_i$ is an isomorphism.
\end{theorem}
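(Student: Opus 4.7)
The plan is to verify well-definedness by a direct commutation, and then establish the isomorphism through the Euler--Koszul framework of~\cite{dmm}.

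\textbf{Well-definedness.} From $[\theta_j,\del_i]=-\delta_{ij}\del_i$ one obtains $[E_j,\del_i]=-a_{ji}\del_i$, so
\[
(E_j-\beta_j)\del_i \;=\; \del_i\bigl(E_j-\beta_j-a_{ji}\bigr) \;=\; \del_i\bigl(E_j-(\beta+Ae_i)_j\bigr)\in H_A(\beta+Ae_i).
\]
Combined with $I_A\del_i\subseteq I_A\subseteq H_A(\beta+Ae_i)$, right multiplication by $\del_i$ descends to a well-defined $D$-linear map $\phi_i\colon D/H_A(\beta)\to D/H_A(\beta+Ae_i)$.

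\textbf{Reduction to Euler--Koszul.} View $S_A=\CC[\del]/I_A$ as the semigroup ring $\CC[\NN A]$ with its standard $\ZZ^d$-grading $\deg\del_j=a_j$. Because the columns of $A$ lie in an open halfspace, $S_A$ is a pointed graded domain and each $\del_j$ is a nonzerodivisor on it. Multiplication by $\del_i$ thus fits into a short exact sequence of graded $\CC[\del]$-modules
\[
0\to S_A(-a_i)\xrightarrow{\;\del_i\;} S_A\to S_A/\del_iS_A\to 0.
\]
Apply the Euler--Koszul functor $\KK_\spot(E-\beta;\blank)$ of~\cite{dmm} and take $\HH_0$. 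Under the identification $\HH_0(E-\beta;S_A)=D/H_A(\beta)$, a grading shift on $S_A$ corresponds to a shift of $\beta$, and the map on $\HH_0$ induced by the arrow $\del_i$ is identified with $\phi_i$.

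\textbf{Conclusion from nonresonance.} The quotient $S_A/\del_iS_A$ has associated primes given by toric face rings of $\RR_{\geq 0}A$ that do not contain $a_i$. For nonresonant $\beta$, the principal vanishing theorem of~\cite{dmm} asserts that Euler--Koszul homology of any such face module vanishes in every degree: the facet conditions $\nu_\sigma(\beta)\notin\ZZ$ are exactly what rules out the integer obstructions at which the Euler sequence fails to be regular on a face. In the long exact sequence of Euler--Koszul homology attached to the short exact sequence above, all flanking terms vanish, forcing $\phi_i$ to be an isomorphism.

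\textbf{Main obstacle.} The crux is the Euler--Koszul vanishing for $S_A/\del_iS_A$ at nonresonant $\beta$: one must track the associated primes of this quotient through faces of $\RR_{\geq 0}A$ and verify that nonresonance rules out exactly the parameters where the Euler operators fail to be a regular sequence on each face ring. Once this is established, the isomorphism $\phi_i$ is a formal consequence of the long exact sequence.
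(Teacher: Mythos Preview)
The paper does not prove this theorem itself; it simply cites \cite{dmm}[Lemma~7.10] and moves on. Your Euler--Koszul argument is correct and is precisely in the spirit of \cite{dmm}, which relies throughout on the Euler--Koszul homology framework (originating with Matusevich--Miller--Walther). The well-definedness check is clean, the short exact sequence is right, and the long exact sequence reduces everything to the vanishing of $\HH_\bullet(E-\beta;S_A/\del_iS_A)$ for nonresonant~$\beta$.

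On that last point your sketch is accurate but worth making precise: every graded associated prime of $S_A/\del_iS_A$ is a face prime $P_\tau$ with $a_i\notin\tau$ (all graded primes of $S_A$ are face primes, and any prime containing $\del_i$ corresponds to a face omitting $a_i$). Hence $\qdeg(S_A/\del_iS_A)$ sits inside a finite union of integer translates of the spans $\CC\tau$, each of which lies in some facet hyperplane $\{\nu_\sigma=0\}$ with $a_i\notin\sigma$. Nonresonance then keeps the parameter off this set, and the vanishing criterion applies.

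One bookkeeping remark: depending on whether you grade by $\deg\del_j=a_j$ or $-a_j$, the map on $\HH_0$ induced by $S_A(\mp a_i)\xrightarrow{\del_i}S_A$ identifies with right multiplication by $\del_i$ from $D/H_A(\beta\mp a_i)$ to $D/H_A(\beta)$, not literally with your $\phi_i$. To obtain $\phi_i$ itself one runs the argument at the shifted parameter $\beta+a_i$; since nonresonance is stable under $\beta\mapsto\beta+A\gamma$ for $\gamma\in\ZZ^n$ (as the paper notes in Definition~\ref{def:nonresonant}), this is harmless.
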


We want to show that an $A$-hypergeometric system is the differential annihilator
of a special kind of function, that we define below.

\begin{definition}
\label{def:fully-supported}
A formal power series $\varphi \in \CC[[x_1,\dots,x_n]]$ 
is \emph{supported on a translate of a 
lattice $L \subseteq \ZZ^n$}
if it is of the form $x^v \sum_{u \in L} \lambda_u x^u$.
The set $\{ v+u : \lambda_u \neq 0 \}$ is called the \emph{support} of $\varphi$.
If the support of $\varphi$ is Zariski dense in the Zariski closure of
$v+L$, then $\varphi$ is \emph{fully supported}.
\end{definition}

We can guarantee the existence of fully supported solutions
of $H_A(\beta)$ if we require that the parameters be generic.

\begin{theorem}
\label{thm:good-sol}
If $\beta$ is generic, then $H_A(\beta)$ has a holomorphic solution that
can be represented as a fully supported power series on a
translate of the lattice
$\ker_{\ZZ}(A)$.
\end{theorem}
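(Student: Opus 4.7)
The plan is to exhibit an explicit GKZ $\Gamma$-series solution whose coefficient pattern produces full support on a translate of $\ker_{\ZZ}(A)$.

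By a standard theorem, the column configuration of $A$ admits a regular triangulation $T$. Fix one, and pick a top-dimensional simplex $\sigma \in T$, which I identify with a $d$-element subset of $\{1,\dots,n\}$ indexing $d$ linearly independent columns of $A$; let $A_\sigma$ denote the corresponding invertible $d \times d$ submatrix. Define $v \in \CC^n$ by $v_i = 0$ for $i \notin \sigma$ and $v|_\sigma = A_\sigma^{-1}\beta$, so that $Av = \beta$. Outside a countable union of hyperplanes in $\CC^d$, each $v_i$ with $i \in \sigma$ is a non-integer; restrict $\beta$ accordingly. Then form the formal Puiseux series
\begin{equation*}
\varphi_v(x) \;=\; \sum_{u \in \ker_{\ZZ}(A)} \frac{x^{v+u}}{\prod_{i=1}^n \Gamma(v_i + u_i + 1)}.
\end{equation*}

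I would verify two algebraic properties of $\varphi_v$. First, $\varphi_v$ formally satisfies $H_A(\beta)$: each Euler operator $E_i - \beta_i$ kills every monomial $x^{v+u}$ because $A(v+u) = \beta$, and for any binomial $\del^{u_+} - \del^{u_-} \in I_A$ the functional equation $\Gamma(z+1) = z\Gamma(z)$, applied iteratively, shows that $\del^{u_+}\varphi_v$ and $\del^{u_-}\varphi_v$ are the same series (they are indexed by the cosets $\ker_{\ZZ}(A) - u_\pm$, which coincide since $u_+ - u_- \in \ker_{\ZZ}(A)$). Second, $\varphi_v$ is fully supported: the coefficient of $x^{v+u}$ vanishes exactly when some $v_i + u_i + 1$ is a non-positive integer, which by the choice of $v$ happens iff $u_i < 0$ for some $i \notin \sigma$. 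Hence the support of $\varphi_v$ is the full-dimensional cone
\begin{equation*}
v + \bigl\{\, u \in \ker_{\ZZ}(A) : u_i \geq 0 \text{ for all } i \notin \sigma \,\bigr\}
\end{equation*}
inside $v + \ker_{\ZZ}(A)$. Projecting $\ker_{\ZZ}(A)$ to the $n-d$ coordinates indexed by $\{1,\dots,n\}\setminus\sigma$ is injective (because $A_\sigma$ is invertible), so after this projection the support contains a full-dimensional sub-cone of a rank-$(n-d)$ lattice, which is Zariski dense in $\CC^{n-d}$. Consequently the support of $\varphi_v$ is Zariski dense in $v + \ker_{\CC}(A)$, the Zariski closure of $v + \ker_{\ZZ}(A)$.

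The main obstacle is convergence: the formal series $\varphi_v$ must represent a genuine holomorphic function on a nonempty open set. This is the classical GKZ convergence theorem, which exploits precisely the fact that $\sigma$ belongs to a regular triangulation: the weight vector $w$ inducing $T$ furnishes a direction along which Stirling's asymptotics on the Gamma factors force exponential decay of the coefficients on the cone supporting $\varphi_v$, producing a nonempty domain of convergence. The detailed argument appears in \cite[Proposition 1]{GKZ}, \cite[Theorem 3.4.14 and Corollary 3.4.15]{SST}, and \cite{hypergeometric-series-solutions}; I would invoke it rather than reprove it, since the construction above is precisely designed to match the hypotheses of those results.
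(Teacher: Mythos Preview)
Your proposal is correct and essentially matches the paper's own proof, which simply cites \cite[Proposition~3.4.4, Lemma~3.4.6]{SST} and \cite{GKZ} in the homogeneous case and \cite[Theorem~2]{hypergeometric-series-solutions} otherwise; you have merely unpacked the $\Gamma$-series construction underlying those references and verified full support explicitly. The only caveat is that in the non-homogeneous case the convergence results of \cite{SST} do not directly apply and your particular $\Gamma$-series can fail to converge for an unlucky choice of simplex $\sigma$, which is precisely why the paper treats the two cases separately and defers to \cite{hypergeometric-series-solutions} when $I_A$ is not homogeneous.
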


\begin{proof}
This follows from \cite{SST}[Proposition 3.4.4, Lemma 3.4.6] in the
case that the toric ideal $I_A$ is homogeneous in the usual grading of the 
polynomial ring $\CC[\del]$. Another proof can be found in \cite{GKZ}.
When $I_A$ is not homogeneous, we use
\cite{hypergeometric-series-solutions}[Theorem 2].
\end{proof}

We are now ready to prove the main result in this section.

\begin{theorem}
\label{thm:A-hyp-weyl-closed}
If $P \in D$ annihilates a fully supported solution $f$ of $H_A(\beta)$,
and $\beta$ is nonresonant, then $P \in H_A(\beta)$.
\end{theorem}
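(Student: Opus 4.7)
I would show that the evaluation map $D/H_A(\beta) \to \CC[[x]]$, $[P] \mapsto Pf$, is injective; since $H_A(\beta) \subseteq \operatorname{Ann}(f)$ automatically, this is equivalent to $\operatorname{Ann}(f) = H_A(\beta)$.  The first move is to exploit the natural $A$-grading on $D$ given by $\deg(x_j) = a_j$ and $\deg(\del_j) = -a_j$: the generators of $H_A(\beta)$ are $A$-homogeneous, and $f$ itself is $A$-homogeneous of degree $Av = \beta$ because its support lies in $v + \ker_\ZZ A$.  Decomposing $P$ into $A$-homogeneous components $P = \sum_\gamma P_\gamma$, the products $P_\gamma f$ land in distinct $A$-weight summands of $\CC[[x]]$, so $Pf = 0$ forces $P_\gamma f = 0$ for each $\gamma$.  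It thus suffices to treat a single $A$-homogeneous $P$ of some degree $\gamma \in \ZZ A$.

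\emph{Normal form and coefficient extraction.}  Using the toric relations in $I_A \subseteq H_A(\beta)$ together with the Weyl algebra commutation rules, I would reduce $P$ modulo $H_A(\beta)$ to a finite sum
\[
P \;\equiv\; \sum_{u \in S} x^{u_+}\del^{u_-}\, q_u(\theta), \qquad Au = \gamma,\; q_u \in \CC[\theta],
\]
chosen so that the monomials $x^{u_+}\del^{u_-}$ represent linearly independent classes modulo $H_A(\beta)$.  Since $x^{u_+}\del^{u_-} q_u(\theta) \cdot x^{v+\mu} = q_u(v+\mu)(v+\mu)^{u_-} x^{v+\mu+u}$, where $(c)^{u_-} = \prod_j c_j(c_j-1)\cdots(c_j-(u_-)_j+1)$ is the falling factorial, the equality $Pf = 0$ produces a system of vanishing identities after collecting by output monomial.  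Separating the contributions from different $u \in S$ --- the main technical step --- forces, for each $u \in S$, the polynomial function $\mu \mapsto q_u(v+\mu)(v+\mu)^{u_-}$ to vanish at every $\mu$ in the support of $f$.

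\emph{Full support and nonresonance.}  By hypothesis the support of $f$ is Zariski dense in the affine subspace $v + \ker_\CC A = \{\theta : A\theta = \beta\}$, so each polynomial $q_u(\theta)(\theta)^{u_-} \in \CC[\theta]$ vanishes identically on this subspace; equivalently it lies in the linear prime ideal $\langle E_1 - \beta_1, \dots, E_d - \beta_d\rangle$.  The quotient $\CC[\theta]/\langle E - \beta\rangle$ is the coordinate ring of an affine-linear subspace and hence an integral domain, so to conclude $q_u \in \langle E - \beta\rangle$ it suffices that no linear factor $\theta_j - k$ of $(\theta)^{u_-}$ lies in $\langle E - \beta\rangle$.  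If such a factor did, one would have $e_j = A^T c$ with $c \in \QQ^d$ and $c \cdot \beta = k \in \ZZ$; the primitive integer rescaling of $c$ is the facet support function $\nu_\sigma$ of the facet $\sigma$ of $\RR_{\ge 0}A$ generated by the columns $\{a_i : i \ne j\}$, so $\nu_\sigma(\beta) \in \ZZ$, contradicting the nonresonance of $\beta$.  Thus $q_u \in \langle E - \beta\rangle$, each summand $x^{u_+}\del^{u_-} q_u$ belongs to the left ideal $H_A(\beta)$, and hence $P \in H_A(\beta)$.

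\emph{Main obstacle.}  The principal difficulty lies in the separation step: output monomials $x^{v+\mu+u}$ from different pairs $(u,\mu)$ can coincide, so each coefficient of $Pf$ is generically a sum of contributions from several $u \in S$, rather than a single one.  The heart of the argument is to choose the normal form --- for example via a minimality argument on $|S|$, or by using the isomorphism of Theorem~\ref{thm:iso} to shift $\gamma$ into a convenient position in $-\NN A$ where a canonical representative of the coset $\{u : Au = \gamma\}$ is available --- so that these contributions genuinely decouple.  Once this decoupling is achieved, the algebraic-geometric conclusions above follow cleanly.
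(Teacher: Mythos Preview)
Your reduction to $A$-homogeneous $P$ is exactly right and matches the paper.  After that, however, your route and the paper's diverge, and the divergence is precisely at the point you flag as the ``main obstacle.''

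The paper does not attempt to separate contributions from several $u$'s in a normal form $\sum_{u\in S} x^{u_+}\del^{u_-}q_u(\theta)$.  Instead it forces $|S|=1$ by the shift you allude to in your last paragraph.  Concretely: pick any monomial $x^{\mu^o}\del^{\nu^o}$ occurring in $P$ and replace $P$ by $P\del^{\mu^o}$, which is $A$-homogeneous of degree $A\nu^o\in\NN A$.  Then every monomial $x^\mu\del^\nu$ in $P\del^{\mu^o}$ satisfies $A\nu=A(\mu+\nu^o)$, so $\del^\nu-\del^{\mu+\nu^o}\in I_A$ and
\[
P\del^{\mu^o}\ \equiv\ \Big(\sum c_{\mu,\nu}\,x^\mu\del^\mu\Big)\del^{\nu^o}\ =\ p(\theta)\,\del^{\nu^o}\pmod{I_A}.
\]
There is now a single term, and no separation problem.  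Nonresonance enters only through Theorem~\ref{thm:iso}: it supplies a fully supported $g$ with $\del^{\mu^o}g=f$, guarantees $\del^{\nu^o}g$ is again fully supported, and at the end lets you cancel $\del^{\mu^o}$ on the right (Lemma~\ref{lemma:deli}).  From $p(\theta)(\del^{\nu^o}g)=0$ and full support one gets $p\in\langle E-\beta'\rangle$ by the Nullstellensatz, with no need to divide out falling-factorial factors.

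So your plan is not wrong, but the step you leave unresolved is exactly the one that carries the argument, and the hint you give (``shift $\gamma$ into $-\NN A$'') is in fact the whole proof once you observe that after the shift the normal form collapses to $p(\theta)\del^{\nu^o}$.  Your alternative mechanism---arguing that no linear factor $\theta_j-k$ of $(\theta)^{u_-}$ lies in $\langle E-\beta\rangle$ via facet support functions---is more delicate than you indicate: such a factor lies in $\langle E-\beta\rangle$ only when $e_j$ is in the row span of $A$, i.e.\ when the remaining columns span a hyperplane, and only then does a facet functional appear.  Even granting that, you would still owe the decoupling of the different $u$'s, which you have not done.  The paper's shift sidesteps both issues at once.
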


\begin{proof}
Let $f$ 
be a fully supported solution of $H_A(\beta)$, and let
$P \in D$ such that $P f = 0$.

The Weyl algebra is $\ZZ^d$-graded via $\deg(x^{\mu}\del^{\nu}) = A(\nu-\mu)$.
If $x^{\mu}\del^{\nu}$ and $x^{\mu'}\del^{\nu'}$ have different $A$-degrees, then
$x^{\mu}\del^{\nu} f$ and $x^{\mu'}\del^{\nu'} f$ have disjoint
supports, since $f$ is supported on a translate of the lattice $\ker_{\ZZ}(A)$. 
Thus, we may assume that $P$ is an $A$-homogeneous differential operator.

Moreover, if $x^{\mu^o}\del^{\nu^o}$ is a monomial with nonzero coefficient
in $P$, then $P\del^{\mu^o}$ is homogeneous of degree 
$A\nu^o \in \NN A \subseteq \ZZ^d$.
By Theorem~\ref{thm:iso}, we can find a solution $g$ of
$H_A(\beta+A\mu^o)$ such that $\del^{\mu^o}g=f$. 
Since $f$ is fully supported,
so is $g$. Finally $\beta+A\mu^o$ is nonresonant.

Write 
$P\del^{\mu^o}=\sum_{\mu,\nu} c_{\mu,\nu} x^{\mu}\del^{\nu}$. 
Since this operator is $A$-homogeneous of degree $A \nu^o$, we have
$A(\nu-\mu) = A\nu^o$, or equivelently, $A\nu = A(\mu+\nu^o)$, for
all $\mu, \nu$ such that $c_{\mu,\nu} \neq 0$.

Now 
\[
P\del^{\mu^o}
= \sum c_{\mu,\nu} x^{\mu} \del^{\nu} 
=  \sum c_{\mu,\nu} x^{\mu} (\del^{\nu} - \del^{\mu+\nu^o}) + \sum c_{\mu,\nu} x^{\mu} \del^{\mu}\del^{\nu^o}.
\]

Note that the binomial $\del^{\nu} - \del^{\mu+\nu^o} \in I_A \subseteq H_A(\beta+A\mu^o)$.
Then $(\sum c_{\mu,\nu} x^{\mu} \del^{\mu}) \del^{\nu^o}$ annihilates $g$, so
$\sum c_{\mu,\nu} x^{\mu} \del^{\mu}$ annihilates $\del^{\nu^o}g$, which is a 
solution of $H_A(\beta+A\mu^o-A\nu^o)$.

We claim that $\del^{\nu^o}g$ is fully supported. As $\beta + A\mu^o - A\nu^o$ 
is nonresonant, right multiplication by $\del^{\nu^o}$ is an isomorphism 
between $D/H_A(\beta+A\mu^o-A\nu^o)$ and $D/H_A(\beta+A\mu^o)$, whose inverse is 
a differential operator we denote $\del^{-\nu^o}$. Then $g = \del^{-\nu^o}(\del^{\nu^o} g)$
is fully supported, and therefore $\del^{\nu^o}g$ is as well.

As
\[
x^{\mu}\del^{\mu} = \prod_{j=1}^n \prod_{k=0}^{\mu_j-1}(\theta_j-k),
\]
we can write  $\sum c_{\mu,\nu} x^{\mu} \del^{\mu} = p(\theta_1,\dots,\theta_n)$ for
some polynomial $p$. Write 
\[
\del^{\nu^o}g = x^{v} \sum_{u \in \ker_{\ZZ}(A)} \lambda_u x^u,
\]
where $Av = \beta+A\mu^o-A\nu^o$.
Then
\[
0=\left[ \sum c_{\mu,\nu} x^{\mu} \del^{\mu} \right] (\del^{\nu^o}g)
= \left[ p(\theta_1,\dots,\theta_n) \right] (\del^{\nu^o}g) =
\sum_{u \in \ker_{\ZZ}(A)} \lambda_u p(v+u) x^{v+u},
\]
so that $p(v+u) = 0$ whenever $\lambda_u\neq 0$. But the 
fact that $g$ is fully supported means that the set
$\{v+u : \lambda_u \neq 0 \}$ is Zariski-dense in $v + \ker(A)$, so $p$ must
vanish on all of $v+\ker(A)$.
By the Nullstellensatz, this implies that 
$p(\theta_1,\dots,\theta_n)=\sum c_{\mu,\nu} x^{\mu}\del^{\mu}$ belongs to
$\langle E-(\beta + A\mu^o - A\nu^o) \rangle \subseteq 
H_A(\beta + A\mu^o-A\nu^o)$,
and so
$(\sum c_{\mu,\nu} x^{\mu}\del^{\mu}) \del^{\nu^o} \in H_A(\beta + A\mu^o)$.
But then $P\del^{\mu^o} \in H_A(\beta + A\mu^o)$, and using Lemma~\ref{lemma:deli},
we obtain $P \in H_A(\beta)$.
\end{proof}

\begin{corollary}
\label{coro:A-hyp-weyl-closed}
If $\beta$ is very generic, then $H_A(\beta)$ is Weyl closed.
\end{corollary}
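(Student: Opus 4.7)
The plan is to assemble Theorem~\ref{thm:good-sol} with Theorem~\ref{thm:A-hyp-weyl-closed}. I would first pin down what \emph{very generic} means in the statement: namely, outside the union of (i) the countable family of nonresonance hyperplanes $\{\beta \in \CC^d : \nu_\sigma(\beta) \in \ZZ\}$ running over all facets $\sigma$ of $A$ together with (ii) the countable collection of algebraic subvarieties excluded by the genericity hypothesis in Theorem~\ref{thm:good-sol}. Both form locally finite collections of proper subvarieties of $\CC^d$, so for very generic $\beta$ we may simultaneously assume that $\beta$ is nonresonant and that $H_A(\beta)$ admits a fully supported holomorphic solution $f$.

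Next I would unwind the definition of Weyl closure. Given $P \in RH_A(\beta) \cap D$, clearing denominators produces a nonzero polynomial $q \in \CC[x]$ with $qP \in H_A(\beta)$. Then $qPf = 0$ as holomorphic functions on the domain of convergence of $f$, and since $q$ is a nonzero polynomial, the identity principle forces $Pf \equiv 0$. In other words, any element of the Weyl closure of $H_A(\beta)$ annihilates the fully supported solution $f$.

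Invoking Theorem~\ref{thm:A-hyp-weyl-closed}, since $\beta$ is nonresonant and $P$ annihilates the fully supported solution $f$, we conclude $P \in H_A(\beta)$. This yields the inclusion $RH_A(\beta) \cap D \subseteq H_A(\beta)$; the reverse containment is automatic, so $H_A(\beta)$ is Weyl closed. I don't expect any real obstacle here, as the substantive work has been absorbed into Theorem~\ref{thm:A-hyp-weyl-closed}; the only bookkeeping is the observation that a finite union of countable locally finite families of proper subvarieties remains countable and locally finite, so the hypotheses of Theorems~\ref{thm:good-sol} and~\ref{thm:A-hyp-weyl-closed} can be imposed simultaneously on a very generic parameter.
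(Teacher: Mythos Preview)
Your proposal is correct and follows essentially the same approach as the paper: impose both genericity conditions simultaneously so that Theorem~\ref{thm:good-sol} and Theorem~\ref{thm:A-hyp-weyl-closed} apply, and conclude that $H_A(\beta)$ equals the annihilator of the fully supported solution $f$, hence is Weyl closed. The paper's proof is terser because it quotes the implication ``differential annihilator $\Rightarrow$ Weyl closed'' as a known fact (cf.\ the reference to \cite{tsai-thesis} in the introduction), whereas you unpack this by clearing denominators and using that a nonzero polynomial does not kill a nonzero holomorphic function; both routes amount to the same thing.
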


\begin{proof}
If we choose $\beta$ generic so that $H_A(\beta)$ has a fully supported
series solution and also require $\beta$ to be nonresonant, we
fall into the hypotheses of Theorem~\ref{thm:A-hyp-weyl-closed},
which implies that $H_A(\beta)$ is a differential annihilator, and therefore
Weyl closed.
\end{proof}

\section{Horn Systems}
\label{sec:horn}

In this section, we show that Horn systems are Weyl closed for very generic parameters.

Let $B$ be an $n \times m$ integer matrix of full rank $m$ such that every nonzero
element of the $\ZZ$-column span of $B$ is \emph{mixed}, meaning that each such vector
has a strictly positive and a strictly negative entry. In particular, the columns of
$B$ are mixed. In this case, we can find a matrix $A$ as in Section~\ref{sec:a-hyp}
with $d=n-m$ such that $A B = 0$.

\begin{definition}
\label{def:horn}
Let $B$ and $A$ be matrices as above.
Given $u \in \ZZ^n$, write $u_+$ for the vector defined by
$(u_+)_i = u_i$ if $u_i \geq 0$, and $(u_+)_i=0$ otherwise. Let
$u_- = u_+-u$. 
The ideal
\[
I(B) = 
\langle \del^{u_+} - \del^{u_-} : u \;\mbox{is a column of} \;B \rangle \subseteq \CC[\del]
\]
is called a \emph{lattice basis ideal} for the lattice $\ZZ B$ spanned by the columns of $B$.
For any $\beta \in \CC^d$ the left $D$-ideal
\[
H(B,\beta) = I(B) + \langle E-\beta \rangle \subseteq D,
\]
where $\< E-\beta\>$ corresponds to the Euler operators $E$ of the matrix $A$,
is called a \emph{Horn system with parameter $\beta$}.
\end{definition}

\begin{remark}
This is the binomial formulation for Horn systems. For the relation with the
classical systems of equations introduced by Appell and
Horn \cite{appell,horn89}, see \cite{dms, dmm}.
\end{remark}

In order to prove that Horn systems are Weyl closed, we need to describe their
solution spaces. This requires information about the lattice
basis ideal $I(B)$, namely, its primary decomposition. 
The main references for primary decomposition of
binomial ideals in general, and lattice basis ideals in particular, are
\cite{binomialideals, fischer-shapiro, hostenshapiro, dmm2}.

Each of the minimal primes of~$I(B)$ arises,
after row and column permutations, from a block decomposition of~$B$
of the form
\begin{equation}\label{eqn:MNOB}
  \left[
    \begin{array}{l|r}
    N & B_J\!\\\hline
    M & 0\ 
    \end{array}
  \right],
\end{equation}
where $M$ is a mixed submatrix of~$B$ of size $q \times p$ for some $0
\leq q \leq p \leq m$ \cite{hostenshapiro}.  (Matrices with $q = 0$
rows are automatically mixed; matrices with $q = 1$ row are never
mixed.)  We note that not all such decompositions correspond to 
minimal primes: the matrix $M$ has to satisfy another condition which
Ho\c{s}ten and Shapiro call irreducibility \cite[Definition~2.2 and
Theorem~2.5]{hostenshapiro}.  If $I(B)$ is a complete intersection,
then only square matrices $M$ will appear in the block
decompositions~(\ref{eqn:MNOB}), by a result of Fischer and Shapiro
\cite{fischer-shapiro}.

Let $\oJ$ be the set of indices of the $q$ rows of $M$ (before
permuting) and let $J = \{1,\dots,n \}\backslash \oJ$
be the index set of $B_J$ (again, before permuting).
Denote by $A_J$ the matrix whose columns are the columns of $A$ indexed by $J$.
Split the
variables $x_1,\ldots,x_n$ and $\del_1,\ldots,\del_n$ into two blocks
each:
\begin{align*}
  x_J = \{x_j : j \in J\}
  &\quad\text{and}\quad
  x_\oJ = \{x_j : j \notin J\}.
\\
  \del_J = \{\del_j : j \in J\}
  &\quad\text{and}\quad
  \del_\oJ = \{\del_j : j \notin J\}.
\end{align*}

Let $\sat(\ZZ B_J) = \QQ B_J \cap \ZZ^{J}$.
For each partial character $\rho : \sat(\ZZ B_J) \to \CC^*$ extending
the trivial character on~$\ZZ B_J$, the ideal 
\[
I_{\rho,J} = I_{\rho} + \langle \del_j : j \not \in J\rangle, \quad \mbox{where} \;
I_{\rho} = \langle \del_J^w - \rho(w-w') \del_J^{w'} : w,w' \in \NN^J, A_J w = A_J w'\rangle ,
\] 
is an
associated prime of~$I(B)$.
Note that the symbol $\rho$ here includes the specification 
of the sublattice
$\sat(\ZZ B_J) \subseteq \ZZ^J$.  

\begin{definition}(cf. \cite{dmm2}[Definition 4.3, Example 4.10])
If the matrix $M$ is square and invertible, the prime $I_{\rho,J}$ is
called a \emph{toral associated prime of $I(B)$}. The corresponding
primary component of $I(B)$, denoted by $C_{\rho,J}$, is called a 
\emph{toral component of $I(B)$}. Associated primes and primary
components that are not toral are called \emph{Andean}.
\end{definition}

The primary decomposition of $I(B)$, and in particular, its toral components,
are important to the study of Horn
systems because of the following fact \cite{dmm}[Theorem 6.8].

\[
\frac{D}{H(B,\beta)} \cong \bigoplus_{C_{\rho,J} \;\tiny{\rm toral}} 
\frac{D}{C_{\rho,J} + \langle E- \beta \rangle} \quad \mbox{for generic} \; \beta.
\]

This implies that, for generic $\beta$, the solution space of $H(B,\beta)$
is the direct sum of the solution spaces of the systems
$C_{\rho,J}+\langle E- \beta\rangle$, for toral $C_{\rho,J}$. In order to describe
these solution spaces, we need an explicit expression for the toral components
$C_{\rho,J}$.

Fix a toral component $C_{\rho,J}$ coming from a decomposition
(\ref{eqn:MNOB}).

Define a graph $\Gamma$ whose vertices are the points in $\NN^{\oJ}$.
Two vertices $u, u'  \in \NN^{\oJ}$  are connected by an edge
if $u-u'$ or $u'-u$ is a column of the matrix $M$. The connected
components of the graph $\Gamma$ are called the \emph{$M$-subgraphs of $\NN^{\oJ}$}.
If $u \in \NN^{\oJ}$, call $\Gamma(u)$ the $M$-subgraph that $u$ belongs to.
Then, by \cite{dmm2}[Corollary~4.14],
\[
C_{\rho,J} = I(B) + I_{\rho, J} + \langle \del_{\oJ}^u : \Gamma(u) \; \mbox{is unbounded}\rangle.
\]

Let $S$ be a set of representatives of the bounded $M$-subgraphs of $\NN^{\oJ}$.
By \cite{dmm}[Proposition 7.6], a basis of the space of polynomial solutions 
of the lattice basis ideal $I(M) \subseteq \CC[\del_{\oJ}]$ considered as a 
system of differential equations, consists of polynomials
\[
G_u = x^{u} \sum_{u+Mv \in \Gamma(u)} c_{v} x_{\oJ}^{Mv},  \quad u \in S,
\]
where the all the coefficients $c_v$ are nonzero.
Fix a basis $\cB_u$ of germs of holomorphic solutions
of
$I_{\rho,J} + \langle E - (\beta - A_{\oJ} u)\rangle$
at a generic nonsingular point,
where $A_{\oJ}$ is the matrix whose columns are the columns of $A$ indexed by 
$\oJ$.

By \cite{dmm}[Theorem 7.13], if $\beta$ is very generic,
a basis of the space 
of germs of holomorphic solutions of  $C_{\rho,J}+\langle E-\beta\rangle$  
at a generic nonsingular point
is given by the functions

\begin{equation}
\label{eqn:basis}
F_{u,f} = x_{\oJ}^u \sum_{u + Mv \in \Gamma(u)} c_v x_{\oJ}^{Mv} \del_J^{-Nv}(f) \; : \;
u \in S, \; f \in {\cB_u}.
\end{equation}

To make sense of the notation $\del_{J}^{-Nv}(f)$, we need the following result.

\begin{lemma}{\cite[Lemma 7.10]{dmm}}
\label{lemma:delJ}
If $\beta$ is very generic and $\alpha \in \NN^{J}$, the map
\[
\frac{D}{I_{\rho,J} + \langle E-\beta\rangle} \longrightarrow 
\frac{D}{I_{\rho,J} + \langle E- (\beta + A_J\alpha) \rangle} 
\]
given by right multiplication by $\del_J^{\alpha}$ is an isomorphism.
Consequently, if $P\del_{J}^{\alpha}$ belongs to 
$I_{\rho,J} + \langle E- (\beta + A_J\alpha) \rangle$,
then $P \in I_{\rho,J} + \langle E- \beta \rangle$.
\end{lemma}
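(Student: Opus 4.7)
The plan is to reduce Lemma~\ref{lemma:delJ} to Theorem~\ref{thm:iso}, which we have already used in the $A$-hypergeometric setting. The first step is to kill the variables not indexed by $J$. Since $I_{\rho,J}$ contains the left $D$-ideal $\langle \del_j : j \notin J\rangle$, and since $\theta_j = x_j\del_j$ already lies in this left ideal for such $j$, in the quotient $D/\langle\del_j : j\notin J\rangle$ each $E_i$ collapses to $E_i^J := \sum_{j\in J}a_{ij}\theta_j$. Under the standard identification $D/\langle\del_j : j \notin J\rangle \cong D_J[x_{\oJ}]$, where $D_J$ is the Weyl algebra in the variables $x_J,\del_J$, the ideal $I_{\rho,J}+\langle E-\beta\rangle$ is extended from the $D_J$-ideal $I_\rho + \langle E^J - \beta\rangle$, so the quotient splits as
\[
D/(I_{\rho,J}+\langle E-\beta\rangle) \;\cong\; \bigl(D_J/(I_\rho+\langle E^J-\beta\rangle)\bigr)\otimes_{\CC}\CC[x_{\oJ}].
\]
Right multiplication by $\del_J^\alpha$ is trivial on the $\CC[x_{\oJ}]$ factor, so it suffices to prove the corresponding statement inside $D_J$.

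Next, I would eliminate the character $\rho$. Because $\CC^*$ is divisible, $\rho$ extends from $\sat(\ZZ B_J)$ to a character of $\ZZ^J$; choose $c_j \in \CC^*$ with $c^w = \rho(w)$ on $\sat(\ZZ B_J)$. Then the automorphism of $D_J$ defined by $x_j \mapsto c_j^{-1}x_j$ and $\del_j \mapsto c_j\del_j$ fixes each $\theta_j$ and carries $I_\rho$ to the ordinary toric ideal $I_{A_J}$. Under this automorphism the cyclic module $D_J/(I_\rho+\langle E^J-\beta\rangle)$ is identified with $D_J/H_{A_J}(\beta)$, and right multiplication by $\del_J^\alpha$ corresponds to right multiplication by the scalar multiple $c^\alpha\del_J^\alpha$, which is an isomorphism iff the right action of $\del_J^\alpha$ is. Factoring $\del_J^\alpha$ as a product of $|\alpha|$ single partials and iterating Theorem~\ref{thm:iso} for the matrix $A_J$ with parameters $\beta, \beta+A_Je_{j_1},\ldots$ gives the desired isomorphism, provided each of these parameters is $A_J$-nonresonant. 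Since $A_J$-nonresonance excludes only a countable family of affine hyperplanes, this holds for very generic $\beta$.

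The consequential statement is then immediate: if $P\del_J^\alpha \in I_{\rho,J}+\langle E-(\beta+A_J\alpha)\rangle$, then the class of $P$ lies in the kernel of an isomorphism, so $P \in I_{\rho,J}+\langle E-\beta\rangle$.

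The main obstacle I anticipate is verifying that $A_J$ actually satisfies the hypotheses needed for Theorem~\ref{thm:iso} — namely that its columns $\ZZ$-span $\ZZ^d$ and lie in an open half-space of $\RR^d$. The half-space condition is inherited from $A$, but the spanning condition should be extracted from the torality of $C_{\rho,J}$, which forces the submatrix $M$ in~(\ref{eqn:MNOB}) to be square and invertible; this squareness is what makes $A_J$ still have full rank $d$, and a short argument using that $\sat(\ZZ B_J) = \QQ B_J \cap \ZZ^J$ together with $A_JB_J = 0$ should promote this to $\ZZ$-spanning. This compatibility check is really the only place where the particular structure of toral components enters.
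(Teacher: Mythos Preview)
The paper does not give its own proof of this lemma: it is quoted verbatim as \cite[Lemma~7.10]{dmm}, the same reference invoked for Theorem~\ref{thm:iso}. So there is no in-paper argument to compare against. That said, your reduction strategy---kill the $\del_{\oJ}$ variables to pass to $D_J$, rescale the $\del_j$ to turn $I_\rho$ into $I_{A_J}$, then iterate Theorem~\ref{thm:iso}---is exactly the right one, and the rescaling step is precisely the device the paper itself uses at the start of the proof of Theorem~\ref{thm:toral-weyl-closed} to transfer results from $I_{A_J}$ to $I_\rho$.

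Your obstacle paragraph is also on target. From $A_JN + A_{\oJ}M = 0$ with $M$ invertible one gets $A_{\oJ} = -A_JNM^{-1}$, so the column span of $A_{\oJ}$ sits inside that of $A_J$, forcing $A_J$ to have full rank $d$; and since $\operatorname{rank} B_J = m-q$ one gets $\sat(\ZZ B_J) = \ker_\ZZ A_J$, which is what makes the rescaling send $I_\rho$ exactly to $I_{A_J}$. The one point you leave slightly open---whether the columns of $A_J$ $\ZZ$-span $\ZZ^d$ rather than a finite-index sublattice---is harmless: either replace $\ZZ^d$ by $\ZZ A_J$ throughout (the facets and nonresonance condition are intrinsic to the column lattice), or note that the proof of Theorem~\ref{thm:iso} in \cite{dmm} only uses nonresonance, not the saturation of $\ZZ A$ in $\ZZ^d$.
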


The isomorphism from Lemma~\ref{lemma:delJ} implies that differentiation
$\del_J^{\alpha}$ is an isomorphism between the solution space of 
$I_{\rho,J}+\langle E- (\beta+A_{J}\alpha)\rangle$ 
and the solution space of $I_{\rho,J}+ \langle E-\beta\rangle$, whose
inverse we denote by $\del_J^{-\alpha}$. This
explains the notation $\del_J^{-Nv}f$ from (\ref{eqn:basis}).

\begin{theorem}
\label{thm:toral-weyl-closed}
Let $C_{\rho,J}$ be a toral component of a lattice basis ideal $I(B)$. If $\beta$
is very generic, then $C_{\rho,J}+\langle E-\beta\rangle$ is the annihilator
of its solution space, and is therefore Weyl closed.
\end{theorem}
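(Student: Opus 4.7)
The plan is to mimic the proof of Theorem~\ref{thm:A-hyp-weyl-closed} one bounded $M$-subgraph at a time.  Let $P\in D$ annihilate every $F_{u,f}$ of~(\ref{eqn:basis}); we wish to show $P\in C_{\rho,J}+\langle E-\beta\rangle$.  First, each $F_{u,f}$ has $A$-weight $\beta$: the summand $c_v x_{\oJ}^{u+Mv}\del_J^{-Nv}(f)$ has weight $A_{\oJ}(u+Mv)+(\beta-A_{\oJ}u)+A_J Nv=\beta$, using $AB=0$.  Since $C_{\rho,J}+\langle E-\beta\rangle$ is $A$-graded, each $A$-homogeneous component of $P$ still annihilates every $F_{u,f}$, and so I may take $P$ to be $A$-homogeneous.

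Following the strategy of Theorem~\ref{thm:A-hyp-weyl-closed}, I would next right-multiply $P$ by a suitable monomial $\del^{\mu^o}$ to shift the $A$-degree into $\NN A$, then reduce modulo the binomial part of $C_{\rho,J}$---namely $\del^{u_+}\equiv\del^{u_-}$ for columns of $B$, the $I_\rho$-relations $\del_J^w\equiv\rho(w-w')\del_J^{w'}$, and the monomials $\del_{\oJ}^u$ for unbounded $\Gamma(u)$---together with the rewriting $x^\mu\del^\mu=\prod_{j}\prod_{k=0}^{\mu_j-1}(\theta_j-k)$.  This should reduce $P\del^{\mu^o}$ modulo $C_{\rho,J}+\langle E-\beta\rangle$ to a sum $\sum_{u\in S} p_u(\theta)\,\del_{\oJ}^u\,\del_J^{\nu^o_u}$, where $S$ ranges over representatives of the bounded $M$-subgraphs and each $p_u$ is a polynomial in $\theta_1,\dots,\theta_n$.

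The decisive step is sector-decoupling.  Because the bounded $M$-subgraphs $\Gamma(u)$ are pairwise disjoint in $\NN^{\oJ}$, applying the above expression to a fixed $F_{u_0,f}$ produces contributions from different $u\in S$ whose $x_{\oJ}$-supports lie in disjoint translates of $\Gamma(u_0)$; hence $PF_{u_0,f}=0$ forces each sector term $p_u(\theta)\,\del_{\oJ}^u\,\del_J^{\nu^o_u}$ to annihilate $\{F_{u_0,f}:f\in\cB_{u_0}\}$ individually.  For fixed $u\in S$, Lemma~\ref{lemma:delJ} and the $\del_J^{-Nv}$-inverses convert this into the statement that a polynomial $p_u(\theta)$ annihilates a fully supported solution of the $\rho$-twisted $A_J$-hypergeometric system $I_\rho+\langle E_J-(\beta-A_{\oJ}u)\rangle$; note that in the toral case $M$ is square, so $A_J$ has full rank $d$, $\sat(\ZZ B_J)=\ker_{\ZZ}(A_J)$, and fully supported series solutions exist by Theorem~\ref{thm:good-sol}.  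The Nullstellensatz argument of Theorem~\ref{thm:A-hyp-weyl-closed} then forces $p_u\in\langle E-(\beta-A_{\oJ}u+A_J\nu^o_u)\rangle$, and unwinding (via Lemma~\ref{lemma:delJ} again) yields $P\in C_{\rho,J}+\langle E-\beta\rangle$.  Weyl closure follows because the differential annihilator of germs of holomorphic solutions at a generic nonsingular point is automatically Weyl closed (Proposition~2.19 of~\cite{tsai-thesis}).

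The main obstacle will be the sector-decoupling argument.  In Theorem~\ref{thm:A-hyp-weyl-closed} there is only one ``sector'' to analyze, but here finitely many bounded $M$-subgraphs contribute simultaneously, so one must verify that no hidden cancellations occur between sectors when the reduced form of $P$ acts on any $F_{u_0,f}$.  The disjointness of the $\Gamma(u)$ in $\NN^{\oJ}$ together with $A$-homogeneity of $P$ should make the decoupling go through, but the careful monomial-support bookkeeping in the Weyl algebra---especially tracking how the $\theta$-polynomials commute past the $\del_{\oJ}^u$ factors---is where the real work lies.
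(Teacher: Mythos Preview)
Your overall plan is in the right spirit---reduce to the $A_J$-hypergeometric situation and then invoke the Nullstellensatz argument of Theorem~\ref{thm:A-hyp-weyl-closed} on a fully supported series---but the crucial reduction step has a genuine gap.

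The claimed normal form $\sum_{u\in S} p_u(\theta)\,\del_{\oJ}^{u}\,\del_J^{\nu^o_u}$ is not achievable by the reductions you list.  The difficulty is the $x_{\oJ}$-part of a general term $x_J^{\mu}x_{\oJ}^{\omu}\del_J^{\nu}\del_{\oJ}^{\onu}$.  The binomial generators of $I(B)$ and $I_\rho$ relate only $\del$-monomials to $\del$-monomials; they do nothing to $x_{\oJ}^{\omu}$.  To absorb $x_{\oJ}^{\omu}$ into a $\theta_{\oJ}$-polynomial one must rewrite $x_{\oJ}^{\omu}\del_{\oJ}^{\onu}$ as a polynomial in $\theta_{\oJ}$ times $\del_{\oJ}^{\onu-\omu}$, but $\onu-\omu$ need not lie in the same $M$-subgraph as $\onu$ (it need not even lie in $\NN^{\oJ}$), so the sector label $u$ is destroyed.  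Right-multiplying by a monomial $\del_{\oJ}^{\mu^o_{\oJ}}$ has the same defect: it shifts $\onu$ by a vector not in $\ZZ M$ and scrambles the $M$-subgraph structure.  There is a second, related gap: even granting the normal form, annihilating a fully supported $f\in\cB_u$ only controls $p_u(\theta_J,0)$, since $\del_{\oJ}^{u}F_{u,f}$ is constant in $x_{\oJ}$; to pin down the $\theta_{\oJ}$-dependence of $p_u$ you would still need a density argument over the remaining $F_{u_0,f}$ that you have not supplied.

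The paper sidesteps both problems.  Instead of a global normal form it introduces a partial order on the bounded $M$-subgraphs ($\Gamma(u)\le\Gamma(u')$ iff some element of $\Gamma(u)$ is coordinatewise $\le$ some element of $\Gamma(u')$), selects a term of $P$ whose $\del_{\oJ}$-exponent $\ogamma$ lies in a \emph{minimal} such subgraph, and applies $P$ to the corresponding $F_{u,f}$.  Minimality forces any cancelling term to have its $\del_{\oJ}$-exponent in the \emph{same} $M$-subgraph---this is precisely your sector-decoupling, but obtained locally, one pair of terms at a time.  The resulting two-term identity in the $x_J$-variables is pushed into $I_{\rho}+\langle E-(\text{shift})\rangle$ (using that this ideal is already known to be the annihilator of a fully supported series), and from it one builds an explicit $P_o\in C_{\rho,J}+\langle E-\beta\rangle$ such that $P-P_o$ has strictly fewer terms with $\del_{\oJ}$-exponent in the minimal subgraph; iterating finishes.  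The factors $x_{\oJ}^{\omu}$ are never eliminated---they are simply carried along and cancel in the explicit construction of $P_o$.
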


\begin{proof}[Proof of Theorem~\ref{thm:toral-weyl-closed}]

First note that if $\beta$ is very generic, then 
$I_{\rho,J} + \langle E-\beta \rangle$
is Weyl closed. In fact, we may assume that this system has
a solution that can be represented as a fully supported power 
series on a translate of $\ker_{\ZZ}(A_J)$, and then
$I_{\rho,J} + \langle E-\beta \rangle$ is the differential annihilator 
of this function.

If $I_{\rho} = I_{A_J}$, this follows from Theorems~\ref{thm:good-sol}
and~\ref{thm:A-hyp-weyl-closed}.
To adapt those results to more general $I_{\rho}$, we note
that $I_{\rho}$ is isomorphic to $I_{A_J}$ by adequately rescaling the
variables $\del_j$, $j \in J$.

If $q = \# \oJ=0$, 
then the preceding paragraphs prove Theorem~\ref{thm:toral-weyl-closed},
so assume $q \neq 0$.

Pick a basis of germs of holomorphic solutions of 
$C_{\rho,J} + \langle E-\beta\rangle$
at a generic nonsingular point 
as in (\ref{eqn:basis}).
We assume that $\beta$ is generic enough that at least one
element of $\cB_u$ can be represented as a fully supported 
series on a translate of $\ker_{\ZZ}(A_J)$.

Let $P \in D$ that annihilates all the functions (\ref{eqn:basis}). We want to show that
$P \in C_{\rho,J}+\langle E-\beta\rangle$.

Write $P = \sum  \lambda_{\mu,\omu,\nu,\onu} x_J^{\mu} x_{\oJ}^{\omu} \del_{J}^{\nu} \del_{\oJ}^{\onu}$,
where all the $\lambda$s are nonzero complex numbers. 
We may assume that all
the $\onu$ appearing in $P$ belong to bounded $M$-subgraphs.

We introduce a partial order on the set of bounded $M$-subgraphs as
follows: $\Gamma(u) \leq \Gamma(u')$ if and only if there exist
elements $v \in \Gamma(u)$ and $v' \in \Gamma(u')$ such that
$v \leq v'$ coordinate-wise. Note that if $\Gamma(u) \leq \Gamma(u')$,
then for every $v \in \Gamma(u)$ there exists $v'\in \Gamma(u')$ such
that $v \leq v'$.

Consider the set $\{ \Gamma(\onu) \}$ of bounded $M$-subgraphs which 
have representatives in $P$, and choose a minimal element in this set,
$\Gamma(\ogamma)$, and a corresponding term in $P$, 
$\lambda_{\alpha,\oalpha,\gamma,\ogamma} x_J^{\alpha} x_{\oJ}^{\oalpha} \del_J^{\gamma} \del_{\oJ}^{\ogamma}$.

Now $\ogamma \in \Gamma(u)$ for some $u \in S$. Consider one of the functions
$F_{u,f}$ from (\ref{eqn:basis}). We know that $P F_{u,f} = 0$. 
Also,
\begin{equation}
\label{eqn:term-to-cancel}
\lambda_{\alpha,\oalpha,\gamma,\ogamma} x_J^{\alpha} x_{\oJ}^{\oalpha} \del_J^{\gamma} \del_{\oJ}^{\ogamma}
F_{u,f} = \ogamma! \lambda_{\alpha,\oalpha,\gamma,\ogamma} x_J^{\alpha} x_{\oJ}^{\oalpha}
\del_{J}^{\gamma-Nz}f(x_J),
\end{equation}
where $\ogamma!$ is the product of the factorials of the coordinates of $\gamma$,
and $\ogamma = u + Mz$. 
The reason that only one term of $F_{u,f}$ survives is that
the $x_{\oJ}$ monomials appearing in $F_{u,f}$ are of the form $\ogamma + My$. If
we could find $My$ whose coordinates are all positive, then there would be 
no bounded $M$-subgraphs. Therefore, some coordinate of $My$ must be negative, so that
applying $\del_{\oJ}^{\ogamma}$ to $x_{\oJ}^{\ogamma+My}$ gives zero.

In order to cancel the term from (\ref{eqn:term-to-cancel}), $P$
must contain a term 
$\lambda_{\sigma,\osigma,\tau,\otau} x_J^{\sigma} x_{\oJ}^{\osigma} \del_{J}^{\tau} \del_{\oJ}^{\otau}$
such that $\del_{\oJ}^{\otau}$ does not kill all the monomials 
$x_{\oJ}^{v} : v \in \Gamma(u)=\Gamma(\ogamma)$. If $\otau \not\in \Gamma(\ogamma)$,
we would have that $\Gamma(\otau) < \Gamma(\ogamma)$, which contradicts the
choice of $\ogamma$. Thus, $\otau = \ogamma + My$ for some $y$.

Now, that
$\lambda_{\sigma,\osigma,\tau,\otau} x_J^{\sigma} x_{\oJ}^{\osigma} \del_{J}^{\tau} \del_{\oJ}^{\otau} F_{u,f}$
is a multiple of (\ref{eqn:term-to-cancel}), 
means that
\begin{equation}
\label{eqn:multiple}
\ogamma! \lambda_{\alpha,\oalpha,\gamma,\ogamma} x_J^{\alpha} x_{\oJ}^{\oalpha}
\del_{J}^{\gamma}\del_J^{-Nz}f(x_J)
= c \otau! \lambda_{\sigma,\osigma,\tau,\otau} x_{J}^{\sigma} x_{\oJ}^{\osigma} \del_J^{\tau} 
\del_J^{-N(z+y)}f(x_J)
\end{equation}
for some nonzero $c$.
Therefore $\osigma=\oalpha$.

Assume that $\tau - Ny$ is coordinate-wise non-negative. (The case when
$\tau - Ny$ has some strictly negative coordinates is resolved by
multiplying $P$ on the right by a suitable monomial in the variables 
$\del_J$, working with a different (albeit still very generic) parameter, and
then applying Lemma~\ref{lemma:delJ}).

Then formula (\ref{eqn:multiple}) implies that
\[
\bigg(
\ogamma! \lambda_{\alpha,\oalpha,\gamma,\ogamma} x_J^{\alpha} \del_{J}^{\gamma} -
c  \otau! \lambda_{\sigma,\osigma,\tau,\otau} x_{J}^{\sigma}\del_J^{\tau-Ny} 
\bigg) \del_J^{-Nz} f(x_J) = 0.
\]
Since $\del_{J}^{-Nz} f(x_J)$ is a (fully supported) solution of 
$I_{\rho,J} + \langle E - (\beta- A_{\oJ}u + A_JNz)\rangle$,
we conclude that
\[
\big(
\ogamma! \lambda_{\alpha,\oalpha,\gamma,\ogamma} x_J^{\alpha} \del_{J}^{\gamma} -
c  \otau! \lambda_{\sigma,\osigma,\tau,\otau} x_{J}^{\sigma}\del_J^{\tau-Ny} 
\big) \in  I_{\rho,J} +
\langle E - (\beta -A_{\oJ}u +A_JNz ) \rangle.
\]
Write 
\[
\big(
\ogamma! \lambda_{\alpha,\oalpha,\gamma,\ogamma} x_J^{\alpha} \del_{J}^{\gamma} -
c  \otau! \lambda_{\sigma,\osigma,\tau,\otau} x_{J}^{\sigma}\del_J^{\tau-Ny} 
\big)
=
Q + \sum_{j \in \oJ} Q_{j} \del_{\oJ}^{e_j},
\]
where $Q \in I_{\rho} + \langle E-(\beta-A_{\oJ}u +A_J Nz )\rangle$.
Then
\[
\big(
\ogamma! \lambda_{\alpha,\oalpha,\gamma,\ogamma} x_J^{\alpha} \del_{J}^{\gamma} -
c  \otau! \lambda_{\sigma,\osigma,\tau,\otau} x_{J}^{\sigma}\del_J^{\tau-Ny} 
\big) 
\del_{\oJ}^{\ogamma} = Q \del_{\oJ}^{\ogamma} + \sum_{j \in \oJ}\del_{\oJ}^{\ogamma+e_j}, \]
where
$Q \del_{\oJ}^{\ogamma}\in I_{\rho} +\langle E - (\beta -A_{\oJ}u +A_JNz + A_{\oJ} \ogamma)  \rangle$.
As
\[
\beta -A_{\oJ}u +A_JNz + A_{\oJ} \ogamma =\beta -A_{\oJ}u + A_JNz  + A_{\oJ} (u+Mz) =
\beta + A(Nz+Mz) = \beta,
\]
we have $Q \del_{\oJ}^{\ogamma} \in I_{\rho} + \langle E -\beta \rangle$.

Define
\[
\begin{array}{l}
P_o = \\
=\lambda_{\alpha,\oalpha,\gamma,\ogamma} x_J^{\alpha} x_{\oJ}^{\oalpha} \del_J^{\gamma} \del_{\oJ}^{\ogamma}
+\lambda_{\sigma,\osigma,\tau,\otau} x_J^{\sigma} x_{\oJ}^{\osigma} \del_{J}^{\tau} \del_{\oJ}^{\otau}
- (\lambda_{\alpha,\oalpha,\gamma,\ogamma} c \frac{\otau!}{\ogamma!} + \lambda_{\sigma,\osigma,\tau,\otau}) 
x_J^{\sigma}x_{\oJ}^{\osigma} \del_J^{\tau-Ny} \del_{\oJ}^{\ogamma} \\
= 
x_{\oJ}^{\oalpha}\big(
\lambda_{\alpha,\oalpha,\gamma,\ogamma} x_J^{\alpha} \del_{J}^{\gamma} -
c  \frac{\otau!}{\ogamma!} \lambda_{\sigma,\osigma,\tau,\otau} x_{J}^{\sigma}\del_J^{\tau-Ny} 
\big)  \del_{\oJ}^{\ogamma}
+
\lambda_{\sigma,\osigma,\tau,\otau} x_J^{\sigma} x_{\oJ}^{\osigma} 
(\del_{J}^{\tau}\del_{\oJ}^{\otau} - \del_{J}^{\tau-Ny}\del_{\oJ}^{\otau-My}) \\

= x_{\oJ}^{\oalpha} Q \del_{\oJ}^{\ogamma} + \sum_{j \in \oJ} x_{\oJ}^{\oalpha} Q_j \del^{\ogamma+e_j}
+\lambda_{\sigma,\osigma,\tau,\otau} x_J^{\sigma} x_{\oJ}^{\osigma} 
(\del_{J}^{\tau}\del_{\oJ}^{\otau} - \del_{J}^{\tau-Ny}\del_{\oJ}^{\otau-My}) \\

\equiv \sum_{j \in \oJ} x_{\oJ}^{\oalpha} Q_{j} \del_{\oJ}^{\ogamma+e_j} \quad \mbox{mod}\, (C_{\rho,J} + 
\langle E- \beta \rangle),
\end{array}
\]
since 
$Q\del_{\oJ}^{\ogamma} \in I_{\rho} + \langle E-\beta \rangle$
and
$\del_{J}^{\tau}\del_{\oJ}^{\otau} - \del_{J}^{\tau-Ny}\del_{\oJ}^{\otau-My} \in I(B)$.

Now consider the operator $P-P_o -\sum_{j \in \oJ} x_{\oJ}^{\oalpha} Q_j \del_{\oJ}^{\ogamma+e_j}$,
which is congruent to $P$ modulo $C_{\rho,J} + \langle E-\beta \rangle$. 
Note that this eliminates two of the terms in $P$ 
at the cost of adding terms with strictly higher
monomials in $\del_{\oJ}$ than $\del_{\oJ}^{\ogamma}$, and possibly adding a multiple of
$x_J^{\sigma}x_{\oJ}^{\osigma} \del_J^{\tau-Ny} \del_{\oJ}^{\ogamma}$. 

We apply the same treatment to $P-P_o -\sum_{j \in \oJ} x_{\oJ}^{\oalpha} Q_j \del_{\oJ}^{\ogamma+e_j}$
that we did to $P$, and repeat. Eventually, this procedure will get rid of all the
terms that have $\del_{\oJ}^{u}$ with $\Gamma(u)$ bounded.

We conclude that $P \in C_{\rho,J} + \langle E - \beta \rangle$.
\end{proof}

We need one more ingredient to prove that Horn systems are Weyl closed.

\begin{proposition}
\label{prop:int}
If $\beta$ is generic, and $C_{\rho_1,J_1}, \dots,C_{\rho_r,J_r}$ are the toral components
of the lattice basis ideal $I(B)$, then
\begin{equation}
\label{eqn:intersection}
\bigcap_{i=1}^r \big(C_{\rho_i,J_i} + \langle E-\beta \rangle \big) 
= I(B) +\langle E-\beta \rangle = H(B,\beta). 
\end{equation}
\end{proposition}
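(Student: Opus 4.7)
The inclusion $H(B,\beta) \subseteq \bigcap_{i=1}^r(C_{\rho_i,J_i}+\<E-\beta\>)$ is automatic, since each primary component $C_{\rho_i,J_i}$ contains $I(B)$; the substantive direction is the reverse inclusion. The plan is to consider the natural $D$-linear map
\[
\phi\colon D/H(B,\beta) \longrightarrow \bigoplus_{i=1}^r D/(C_{\rho_i,J_i}+\<E-\beta\>)
\]
induced by the $r$ quotients $D \twoheadrightarrow D/(C_{\rho_i,J_i}+\<E-\beta\>)$. Its kernel is exactly $\bigcap_i(C_{\rho_i,J_i}+\<E-\beta\>)/H(B,\beta)$, so the proposition is equivalent to the injectivity of $\phi$. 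I would factor $\phi$ through the obvious quotient as
\[
D/H(B,\beta) \twoheadrightarrow D\Big/\bigcap_{i=1}^r(C_{\rho_i,J_i}+\<E-\beta\>) \hookrightarrow \bigoplus_{i=1}^r D/(C_{\rho_i,J_i}+\<E-\beta\>),
\]
where the second map is the always-injective assembly of quotients. Showing $\phi$ is an isomorphism will then force the left-hand surjection to be an isomorphism, which is exactly the desired equality.

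To establish that $\phi$ is an isomorphism I would combine two inputs. First, \cite{dmm}[Theorem 6.8] supplies an abstract $D$-module isomorphism $D/H(B,\beta) \cong \bigoplus_i D/(C_{\rho_i,J_i}+\<E-\beta\>)$ for generic $\beta$, so the two sides of $\phi$ have the same holonomic rank. Second, the explicit basis~(\ref{eqn:basis}) exhibits a basis of the space of germs of holomorphic solutions of $H(B,\beta)$ at a generic nonsingular point as the disjoint union of bases of the solution spaces of the individual $C_{\rho_i,J_i}+\<E-\beta\>$. Consequently the natural summation of solution spaces $\bigoplus_i \mathrm{Sol}(C_{\rho_i,J_i}+\<E-\beta\>) \to \mathrm{Sol}(H(B,\beta))$, which is precisely what $\phi$ induces on solutions, is a $\CC$-linear isomorphism.

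The main obstacle will be upgrading this solution-space bijection and the rank equality to a module-level isomorphism. The cleanest path is via a Chinese Remainder argument: prove that for very generic $\beta$, distinct toral summands are coprime as $D$-ideals, i.e., $(C_{\rho_i,J_i}+\<E-\beta\>)+(C_{\rho_j,J_j}+\<E-\beta\>)=D$ for $i\neq j$. Each toral summand is Weyl closed by Theorem~\ref{thm:toral-weyl-closed}, hence is the differential annihilator of its solution space; the linear independence of solutions across distinct toral components visible in~(\ref{eqn:basis}) will then force the sum of any two distinct toral summands to annihilate no nonzero function, which for holonomic ideals is enough to conclude that the sum equals $D$. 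The Chinese Remainder Theorem then delivers the direct-sum decomposition and the intersection formula in a single stroke.
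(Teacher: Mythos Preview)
Your overall setup---consider the natural $D$-linear map $\phi\colon D/H(B,\beta)\to\bigoplus_i D/(C_{\rho_i,J_i}+\<E-\beta\>)$ and show it is injective---is exactly what the paper does. The paper's proof, however, is a single sentence at this point: it cites the \emph{proof} (not merely the statement) of \cite[Theorem~6.8]{dmm} to conclude that the natural map $\phi$ itself is an isomorphism for generic $\beta$. The isomorphism in that theorem is not abstract; it is constructed precisely as the assembly of quotient maps, so injectivity of $\phi$ is immediate. You read the cited result as only providing an abstract isomorphism (hence only a rank equality), and this forces you into an unnecessary detour.

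That detour contains a genuine gap. Your Chinese Remainder step asserts that if two holonomic left ideals have no common nonzero holomorphic solution at a generic point, then their sum is all of $D$. This is false: in $D_1$, the left ideal $D_1\cdot x$ is proper (indeed $D_1/D_1 x$ is the simple holonomic module supported at the origin), yet its solution space in germs of holomorphic functions at any nonzero point is $\{0\}$. More generally, rank~$0$ holonomic $D$-modules need not vanish, so ``no solutions'' does not force the ideal to equal $D$. Thus your coprimality claim for the toral summands is not established by the solution-space argument you give. A secondary issue is that invoking Theorem~\ref{thm:toral-weyl-closed} would downgrade the hypothesis from ``generic'' to ``very generic,'' whereas the proposition as stated needs only generic $\beta$; the paper avoids this by not using Theorem~\ref{thm:toral-weyl-closed} here at all.
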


\begin{proof}
The inclusion $\supseteq$ follows from the fact that
$\cap_{i=1}^r C_{\rho_i,J_i} \supseteq I(B)$. 
To prove the reverse inclusion, let
$P$ be an element of the left hand side of (\ref{eqn:intersection}).

By the proof of \cite{dmm}[Theorem 6.8], the natural map
\[
\frac{D}{H(B,\beta)} 
\longrightarrow 
\bigoplus_{i=1}^r \frac{D}{C_{\rho_i,J_i} + \langle E-\beta \rangle}
\]
is an isomorphism when $\beta$ is generic. 
Since $P$ belongs to the left hand side of
(\ref{eqn:intersection}), its image under this map is zero. Therefore
$P$ must be an element of $H(B,\beta)$.
\end{proof}

\begin{corollary}
\label{coro:horn-weyl-closed}
For very generic $\beta$, the Horn system $H(B,\beta)$ is Weyl closed.
\end{corollary}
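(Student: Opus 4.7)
The plan is to combine the two results that immediately precede the corollary. Proposition~\ref{prop:int} writes $H(B,\beta)$ as a finite intersection of the toral summands $C_{\rho_i,J_i} + \langle E-\beta\rangle$ for generic $\beta$, and Theorem~\ref{thm:toral-weyl-closed} says that, for very generic $\beta$, each of these summands is Weyl closed (in fact, is a differential annihilator). Thus the corollary reduces to a general, parameter-free principle: a finite intersection of Weyl closed left $D$-ideals is itself Weyl closed.

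I would verify this principle by a short direct inclusion argument. For Weyl closed ideals $I_1,\dots,I_r$, the containment $\bigcap_i I_i \subseteq I_j$ gives $R\bigl(\bigcap_i I_i\bigr) \subseteq RI_j$ for every $j$, hence
$$
R\Bigl(\bigcap_i I_i\Bigr) \cap D \;\subseteq\; \bigcap_j \bigl(RI_j \cap D\bigr) \;=\; \bigcap_j I_j,
$$
the last equality using Weyl closedness of each $I_j$. The reverse inclusion is trivial, so $\bigcap_i I_i$ equals its own Weyl closure. Applied to $I_i = C_{\rho_i,J_i}+\langle E-\beta\rangle$, this yields exactly $H(B,\beta) = R H(B,\beta) \cap D$.

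The only thing to watch is that the ``very generic'' $\beta$ used here must simultaneously satisfy the genericity hypothesis of Proposition~\ref{prop:int} and the very-generic hypothesis of Theorem~\ref{thm:toral-weyl-closed} for each of the finitely many toral components $C_{\rho_i,J_i}$. Each requirement excludes a countable locally finite collection of algebraic subvarieties of parameter space, and a finite union of such collections is again of the same form, so no obstacle arises. I expect no genuine difficulty beyond this bookkeeping, and the proof should be only a few lines long.
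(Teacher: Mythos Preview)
Your proposal is correct and follows exactly the paper's own argument: use Proposition~\ref{prop:int} to express $H(B,\beta)$ as the intersection of the toral systems $C_{\rho_i,J_i}+\langle E-\beta\rangle$, invoke Theorem~\ref{thm:toral-weyl-closed} to get Weyl closedness of each, and conclude via the general fact that intersections of Weyl closed ideals are Weyl closed. Your explicit verification of that last fact and your remark on combining the genericity conditions are slightly more detailed than what the paper writes, but the structure is identical.
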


\begin{proof}
For generic $\beta$, Proposition~\ref{prop:int} says that
$H(B,\beta)$ is the intersection of
the systems $C_{\rho,J} + \langle E - \beta \rangle$ corresponding
to the toral components of $I(B)$. Each of these is Weyl closed 
for very generic parameters
by Theorem~\ref{thm:toral-weyl-closed}. 
We finish by noting that the intersection of Weyl closed $D$-ideals
is Weyl closed.
\end{proof}


\raggedbottom
\def\cprime{$'$} \def\cprime{$'$}
\providecommand{\MR}{\relax\ifhmode\unskip\space\fi MR }
\providecommand{\MRhref}[2]{%
  \href{http://www.ams.org/mathscinet-getitem?mr=#1}{#2}
}
\providecommand{\href}[2]{#2}

\end{document}